\newtheorem{thm}{Theorem}[section]
\newtheorem{prop}[thm]{Proposition}
\newtheorem{cor}[thm]{Corollary}
\newtheorem{definition}[thm]{{Definition}}
\DeclareMathOperator*{\esssup}{ess\,sup}
\DeclareMathOperator*{\essinf}{ess\,inf}
\renewcommand{\bar}{\overline}
\newcommand{\wt}[1]{\widetilde{#1}}
\newcommand\pa{\partial}
\newcommand\eps\varepsilon
\renewcommand\epsilon\varepsilon
\newcommand\CI{{\mathcal{C}}^{\infty}}
\newcommand\Div{\operatorname{div}}
\newcommand\loc{\operatorname{loc}}
\newcommand\dist{\operatorname{dist}}
\newcommand\paperintro%
\newcommand\paperbody%
\newcommand\bbC{\mathbb{C}}
\newcommand\bbD{\mathbb{D}}
\newcommand\bbE{\mathbb{E}}
\newcommand\bbP{\mathbb{P}}
\newcommand\bbR{\mathbb{R}}
\newcommand\cA{\mathcal{A}}
\newcommand\cC{\mathcal{C}}
\newcommand\cK{\mathcal{K}}
\newcommand\cL{\mathcal{L}}
\DeclareMathAlphabet{\mathpzc}{OT1}{pzc}{m}{it}
\title[2d Exponential localization via Brownian Flux]{Exponential localization of 2d Magnetic Schr\"{o}dinger eigenfunctions via Brownian Flux}
\author{Hadrian Quan}
\begin{document}

\maketitle

\vspace*{-1cm}\begin{abstract}
    We study solutions of $\tfrac12 (-i\nabla - A(x))^2f=\lambda f$ on domains $\Omega\subset \bbR^2$ with Dirichlet boundary conditions and prove exponential decay estimates in terms of an Agmon type distance to a classically allowed region. This metric depends only on the eigenvalue and associated magnetic field. In fact the main quantity in the weight for this distance function can be interpreted as an `average magnetic flux' of the magnetic field along all domains whose boundary is the closed curved formed by joing the `minimal path' to another random path. Our estimates are based on an analysis of the associated heat kernel using the Feynman-Kac-It\'o formula. 
\end{abstract}

\section{Introduction and results}

\subsection{Agmon estimates for scalar potentials} This paper is a contribution to the vast literature on the exponential decay of eigenfunctions of purely magnetic Schr\"odinger operators, $H(A)=\tfrac12(-i\nabla-A(x))^2$, associated to the discrete spectrum \cite{arnold2019localization, brummelhuis1991exponential, bonnaillie2022purely, helffer-nourrigat, fournais-helffer, lieb1976bounds, montgomery, poggi, poggi-mayboroda}. There is an even more expansive history of works which study the case of a Schr\"odinger operator with a scalar (or electric) potential, $-\tfrac12\Delta+V$,  \cite{agmon, carmona1978pointwise, carmona-simon, filoche2012universal, simon1974pointwise, simon1983semiclassical, lithner1964theorem}, but the mechanism for localization in the magnetic case is comparatively less well-understood. Much of the existing work in the magnetic cases proceeds along similar lines as Agmon's pioneering approach to the scalar potential case, which we briefly recall.

Starting from the Schr\"odinger eigenvalue problem for a scalar potential $V$,
\[ -\tfrac12\Delta f + Vf = \lambda f, \]
for a $V\geq 0$ which grows sufficiently fast at infinity, one multiplies by $f$ and integrates by parts to obtain
\[ \tfrac12\int_{\bbR^d} |\nabla f|^2 dx + \int_\Omega V\,f^2 dx = \int \lambda f^2 dx . \]
This identity suggests that most of the $L^2$-mass of an eigenfunction lies in the `classically allowed region' $E_\lambda=\{x: V(x)\leq \lambda\}$ and relatively little $L^2$-mass of an eigenfunction can be contained in the `classically forbidden region' $\{x\in \bbR^d: V(x)>\lambda\}$. To refine this heuristic one can observe from this energy identity immediately that
\[ \tfrac12\int_{\bbR^d} |\nabla (e^\phi f)|^2 dx + \int_\Omega e^{2\phi} (V-|\nabla \phi|^2-\lambda) f^2 dx = 0,  \] 
and thus for \emph{any} $\phi$ one concludes after dropping the non-negative term
\[ \int_\Omega e^{2\phi} (V-|\nabla \phi|^2-\lambda) f^2 dx \leq 0 , \]
at which point one seeks a clever choice of $\phi$ to imply localization. Agmon's now classical idea was to consider a weighted distance to the classically allowed region, by defining
\begin{equation}\label{eq:scalar-agmon}
    \rho_{V,\lambda}(x,y) = \inf_{\gamma} \int_0^1 \max\left(\sqrt{V(\gamma(t))-\lambda},0\right) |\dot\gamma(t)| dt  
\end{equation} 
and taking $\phi=(1-\eps)\rho_{V,\lambda}(x,E_\lambda)$. This immediately implies that the eigenfunction is localized inside a small neighborhood of the classical region and the decay rate outside of the classical region can be measured by this weighted distance to the classical region.

Given the effectiveness and simplicity of this approach it is no surprise many authors have also tried to construct analogues of a magnetic Agmon metric via similar energy identities. In practice this is quite difficult as the naive approach of beginning with the eigenvalue equation $H(A)f=\lambda f$ and then integrating by parts leads to the identity
\[ \tfrac12\int_{\bbR^d} |(-i\nabla-A) f|^2 dx = \int \lambda f^2 dx . \]%
Now that the magnetic potential is part of the `kinetic energy' one cannot drop this non-negative term; any localization induced by the magnetic field must be implicit in this left hand quantity. This naive approach thus leads to decay estimates which are the same as those of $A\equiv 0$, which are known to be non-optimal; this can already be seen in the case of a constant magnetic field (see \S \ref{sec:phys-levy}). The aforementioned works remedy this issue by a combination of assumptions on the structure of the magnetic potential/field to be e.g. non-negative, radial, polynomial of some order, possessing a unique non-degenerate minimum etc., and combining such assumptions with a variety of methods ranging from semiclassical to harmonic analysis. 

\subsection{Probability and spectral theory} On the other hand, there is similarly a long history employing probabilistic methods to study Schr\"odinger operators, going all the way back to the foundational work of Kac \cite{kac1951some}, and reinvigorated by the book of Simon \cite{simon1979functional}, which built off and inspired \cite{carmona1978pointwise, carmona-simon, lieb1976bounds, malliavin1986minoration, simon1983semiclassical, shigekawa1987eigenvalue, steinerberger2017localization, steinerberger2021effective, ueki} among many others. This probabilistic approach is gratifying given the ubiquity of the `path integral' interpretation of quantum mechanics, which motivates heuristics for the structure of Schr\"odinger eigenfunctions; Agmon's metric itself is a prime example of the behavior of eigenfunctions being dictated by the structure of ``classical paths". 

The present paper proceeds in this vein by introducing a magnetic Agmon metric obtained by probabilistic methods. Rather than the potential $A\in \CI(\Omega,\bbR^2)$, the function playing a role in the analogous Agmon weight is the magnetic field 
\[ \beta = \text{curl}\, A :=\pa_{x_1} A_2-\pa_{x_2} A_1\in \CI(\Omega). \] 
This is unsurprising given the natural gauge invariance our operator enjoys, see e.g. \S \ref{sec:gauge}. Further, the `magnetic flux' associated to $\beta$ is of primary physical interest, and appears in e.g. the celebrated Aharonov-Bohm effect.

The form of our magnetic Agmon metric will resemble \eqref{eq:scalar-agmon}, with the role of $V(\gamma(t))$ replaced by a quantity which can loosely be interpreted as the `average magnetic flux' of $\beta$ integrated over domains bounded by fixing a $\gamma(t)$ and closing it with `every other path' starting at $\gamma(0)$. Let us now introduce our main quantities and results. 

\subsection{Statement of the result}

Let $\omega_x(t)$ denote a Brownian motion on $\bbR^2$ started at $x\in \Omega$, i.e. $\omega_x(0)=x$. For $\gamma: [0,T]\to \Omega$, an arbitrary $C^1$ path from $\gamma(0)=x$ to $\gamma(1)=y$, we can define a straight-line homotopy,
\[ \Gamma:[0,1]_s\times [0,T]_t \to \Omega, \quad \Gamma(s,t) = s \omega_x(t) + (1-s)\gamma(t)  \]
which continuously connects $\omega_x(t)$ to $\gamma(t)$ as the deformation parameter $s$ varies. Having fixed this notation, we now define for each $C^1$ path $\gamma:[0,T]\to \Omega$ the function,
\begin{equation}\label{eq:avg-curl}
    \bar\beta(\gamma(t)) = \bbE\left[ |\omega_x(t)-\gamma(t)|^2  \left(\int_0^1 \beta(\Gamma(s,t)) \,s\, ds\right)^2 \right]  .
\end{equation}
For each fixed time $t$ this quantity should be thought of as a `weighted distance' between $\gamma$ and the location of a `random path' at the time; this weighting is by the integral of the magnetic field along the straight-line connecting them. Integrating the function $\bar\beta(\gamma(t))$ along $[0,T]$, and using Tonelli to exchange integration and expectation we obtain a quantity which is (morally) the square of the expected flux of $\beta$ between $\gamma(t)$ and any $\omega_x(t)$.

\begin{figure}[h]
\centering
    \includegraphics[scale=.28]{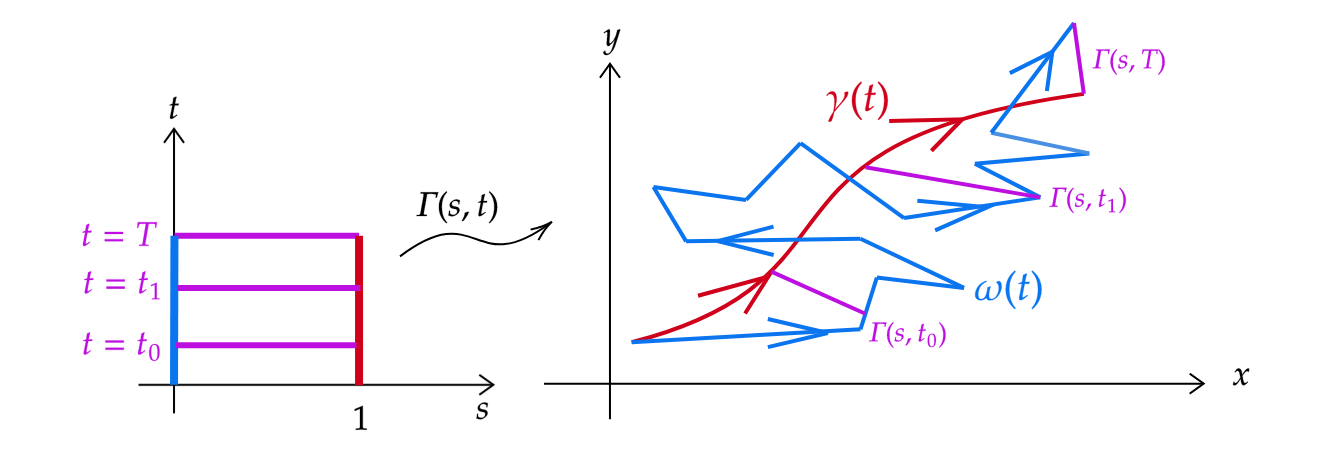}
    \caption{The Homotopy $\Gamma(s,t)$ between a fixed path $\gamma(t)$ and a Brownian $\omega(t)$. Purple denotes the straight-lines connecting $\gamma(t_0)$ to $\omega(t_0)$; $\bar\beta(\gamma(t_0))$ is defined as an integral of $\beta$ along such lines.}
\end{figure}

Given the function $\bar\beta$, we now define a (family of) magnetic Agmon metrics: for every $a>0$, define
\begin{equation}\label{eq:mag-agmon-weight}
    \rho_{\beta,\lambda}^a(x,y) = \inf_{\gamma:x\mapsto y} \int_0^1 \max\left(\sqrt{\bar\beta(\gamma(t))-2(\lambda-\nu_1/a^2}) \, ,0\right) \, |\dot\gamma(t)| \, dt,
\end{equation}
where $\nu_1$ is the ground state energy of $-\tfrac12\Delta$ on the unit disc. For ease of exposition we will often set $a=1$.

Our results hold for a broad class of potentials $A$, and are relatively agnostic to the fine structure of the associated $\beta$ (notice in particular we make no assumption about $\beta$ having a fixed sign). For more on this specific function class see \S \ref{sec:stochastic}.

\begin{thm}\label{thm:main}
    Assume the magnetic potential $A$ lies in the local vectorial Kato class, $A\in \vec{\cK}_{\loc}(\bbR^2)$, with magnetic field $\beta=\emph{curl}\, A$. Let $H(A)f=\lambda f$ be a Dirichlet eigenfunction, and define $E_\lambda=\{x\in \Omega: \tfrac12|A|\leq \lambda\}$ the classically allowed region at energy $\lambda$. Then, there exists $c_{a}>0$ depending only on $a$, with
    \[ |f(x)| \leq c_{a}\,  e^{-\rho_{\beta,\lambda}^a(x,E_\lambda)} \, \|f\|_\infty.  \] 
\end{thm}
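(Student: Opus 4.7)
I would base the proof on the Feynman--Kac--It\^o (FKI) representation of the Dirichlet heat semigroup $e^{-tH(A)}$ combined with a (stochastic) Stokes' theorem along the straight-line homotopy $\Gamma(s,\tau)$ from the introduction. The idea is to introduce an auxiliary $C^1$ candidate path $\gamma:[0,T]\to\Omega$ from $x$ to a point $y\in E_\lambda$, compare the stochastic line integral of $A$ along a Brownian trajectory $\omega_x$ with the deterministic line integral along $\gamma$, and absorb the difference as the flux of $\beta$ through the homotopy region. It is then the dispersion of this magnetic flux over random Brownian paths that produces decay, while a Brownian tube estimate converts the $e^{t\lambda}$ prefactor into the $\nu_1/a^2$ shift.

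\textbf{Main steps.} Applying FKI to $H(A)f=\lambda f$ gives, for every $t>0$,
\[
f(x)=e^{t\lambda}\,\bbE_x\!\left[\exp\!\left(-i\!\int_0^t A(\omega_x(s))\circ d\omega_x(s)\right) f(\omega_x(t))\,\chi_{\{t<\tau_\Omega\}}\right],
\]
where $\tau_\Omega$ is the first exit time of $\omega_x$. Taking moduli naively only reproduces the diamagnetic (scalar) estimate, so I fix an arc-length parametrized $\gamma$ and apply a stochastic Stokes' identity on $\Gamma(s,\tau)=s\omega_x(\tau)+(1-s)\gamma(\tau)$ to decompose the phase as
\[
\int_0^t A(\omega_x)\circ d\omega_x = \int_0^t A(\gamma)\cdot\dot\gamma\,d\tau + \Phi(\omega_x,\gamma;t),
\]
with $\Phi$ the signed flux of $\beta$ through the region swept by $\Gamma$. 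The deterministic phase factors out of the modulus, leaving $|f(x)|\leq e^{t\lambda}\|f\|_\infty\,|\bbE_x[e^{-i\Phi}\chi_{\{t<\tau_\Omega\}}]|$. A second-moment Jensen/Cauchy--Schwarz bound controls the oscillatory expectation by $\exp(-\tfrac12\mathrm{Var}(\Phi))$, and an It\^o-type expansion of $\Phi$ along $\Gamma$ identifies its instantaneous variance at time $\tau$ with precisely $\bar\beta(\gamma(\tau))\,|\dot\gamma(\tau)|^2\,d\tau$---which is exactly the weight appearing in \eqref{eq:avg-curl}--\eqref{eq:mag-agmon-weight}. The prefactor $e^{t\lambda}$ is then absorbed by restricting $\omega_x$ to a Brownian tube of radius $a$ around $\gamma$: this event occurs with probability $\sim e^{-t\nu_1/a^2}$ by the Dirichlet ground state bound for $-\tfrac12\Delta$ on the disc of radius $a$ (and it also enforces $t<\tau_\Omega$ when $\gamma$ stays inside $\Omega$). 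Combining $e^{t(\lambda-\nu_1/a^2)}$ with the flux-variance decay and performing the standard Agmon length/energy optimization in $t$ (reparametrizing so that $|\dot\gamma|^2\sim \bar\beta-2(\lambda-\nu_1/a^2)$) produces the factor $\exp(-\int_0^1\max(\sqrt{\bar\beta-2(\lambda-\nu_1/a^2)},0)\,|\dot\gamma|\,d\tau)\|f\|_\infty$; taking the infimum over admissible $\gamma$ terminating in $E_\lambda$ yields the stated bound.

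\textbf{Main obstacle.} The delicate step is the rigorous implementation of stochastic Stokes on $\Gamma(s,\tau)$ and the exact identification of $\mathrm{Var}(\Phi)$ with the specific expression \eqref{eq:avg-curl}, including the correct numerical factor of $2$ in front of the shift $\lambda-\nu_1/a^2$. The low regularity of $\omega_x$ forces careful It\^o/Stratonovich bookkeeping when differentiating $\beta(\Gamma(s,\tau))$ against $\partial_\tau\Gamma$, and the local vectorial Kato-class hypothesis on $A$ is precisely what guarantees that the FKI formula, the stochastic line integral $\int A\circ d\omega_x$, and the tube estimates are all well-defined and quantitatively robust.
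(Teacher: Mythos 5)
Your proposal tracks the paper's proof closely: the Feynman--Kac--It\^o representation, a stochastic Stokes formula (as in \eqref{eqn:IM-formula}) along the straight-line homotopy $\Gamma$, a variance computation for the stochastic flux, a Brownian tube estimate to absorb $e^{t\lambda}$, Young's inequality, and an infimum over paths---exactly the paper's sequence. Two of your justifications are off in ways that matter. First, the step where you control $|\bbE_x[e^{-i\Phi}\chi]|$ by $\exp(-\tfrac12\mathrm{Var}(\Phi))$ via ``a second-moment Jensen/Cauchy--Schwarz bound'' is not a valid inequality for a general random variable; Jensen and Cauchy--Schwarz run the wrong direction here, and the estimate already fails if $\Phi$ is a rescaled symmetric Bernoulli (take $\Phi=\pm 2\pi$, so $\bbE[e^{i\Phi}]=1$ while $\mathrm{Var}(\Phi)=4\pi^2$). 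The paper's argument is that after the Ikeda--Manabe decomposition $\iint_{D(\gamma,\omega)}\beta=\int_0^T\beta_1\cdot d\omega+\int_0^T\beta_2\,dt$, the It\^o-integral part is a time-changed Brownian motion, hence the flux is Gaussian and $|\bbE[e^{i\Phi}]|=e^{-\tfrac12\mathrm{Var}(\Phi)}$ is an \emph{identity}, not a one-sided bound; without the Gaussianity claim your estimate does not follow.

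Second, your accounting of where the $|\dot\gamma|^2$ appears is wrong in both places. You assert that the tube event has probability $\sim e^{-t\nu_1/a^2}$ and that the instantaneous variance of $\Phi$ is $\bar\beta(\gamma(\tau))|\dot\gamma(\tau)|^2\,d\tau$. In the paper, the Fujita/Onsager--Machlup estimate gives $\bbP(\sup_{t\leq T}|\omega(t)-\gamma(t)|\leq a)\asymp C_a\exp\bigl(-\nu_1T/a^2-\tfrac12\int_0^T|\dot\gamma(t)|^2dt\bigr)$, so the kinetic-energy term comes from the tube probability, while $\mathrm{Var}(\Phi)=\int_0^T\bar\beta(\gamma(t))\,dt$ carries no $|\dot\gamma|^2$ at all (compare \eqref{eq:avg-curl}, which has no factor of $|\dot\gamma|$). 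Young's inequality applied to $\tfrac12\int|\dot\gamma|^2+\tfrac12\int\bigl(\bar\beta-2(\lambda-\nu_1/a^2)\bigr)$ then produces the Agmon integrand $\sqrt{(\bar\beta-2(\lambda-\nu_1/a^2))_+}\,|\dot\gamma|$. Your final answer has the right shape, but it is reached through two compensating misattributions rather than the correct intermediate identities.
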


We note here that because $\cC^1(\bbR^d,\bbR^d)\subset \vec{\cK}_{\loc}(\bbR^d)$, most vector potentials $A$ of physical interest are contained in this class. In particular we make no other assumptions on the structure of the magnetic field $\beta$ such as being e.g. radial, of fixed sign, possessing a unique or finitely many non-degenerate minima, etc. 

\subsection{An example}
We now consider theorem \eqref{thm:main} in the case of a uniform magnetic field. If $\beta\equiv \beta_0$ is constant, we can simplify our quantities (see \S \ref{sec:phys-levy} for more details) to find
\[ \bar\beta(\gamma(t)) = \frac{\beta_0^2}{4}\bbE[|\gamma(s)-\omega(s)|^2], \quad E_{\lambda_0} = \{\tfrac12|A|^2 \leq \lambda_0 \} = \{|y|^2\leq \beta_0\}. \]
With these simpler expressions for $\bar\beta(\gamma(t))$ and $E_{\lambda_0}$, we can prove that for $a=1$ and for any path $\gamma$ beginning at $x$ that its corresponding magnetic Agmon length is at least
\[ \int_0^1 \sqrt{(\bar\beta(\gamma(t)) - 2(\lambda-\nu_1))_+} \, |\dot\gamma(t)|dt  \geq \tfrac{\beta_0}{8}\dist(x,y)^2 , \] 
where $y\in \pa E_{\lambda_0}$ is the point achieving $\rho_{\beta,\lambda_0}(x,E_{\lambda_0})$. Infimizing over all paths from $x$ to the classical region, Theorem \eqref{thm:main} predicts that the ground state eigenfunction satisfies for all $a>0$ that
\begin{equation}\label{eq:inf-achieved}
   |f_0(x)| \leq c_{a} \exp\left\{-\tfrac{\beta_0}{8}\dist(x,y)^2  \right\} ,   
\end{equation} 
which recovers the leading order asymptotics of the true exponential decay of the ground state in this example, namely that
\[ f_0(x) = \tfrac{1}{\pi} \exp\{-\tfrac{\beta_0}{2}|x|^2\} .  \]

\subsection{Other consquences of Theorem \ref{thm:main}}
Along similar lines, if we assume more about the structure of the magnetic field we can conclude stronger statements about the rate of decay of eigenfunctions.

\begin{cor}\label{cor:confine}
    Assume $\beta$ is a confining potential, i.e. outside some compact set $\beta(x)\geq \beta_0|x|^2$. Then for any magnetic potential $A\in \cK_{\loc}(\bbR^2)$ with associated field $\beta$, any eigenfunction $f$ of $H(A)$ with eigenvalue $\lambda$ we have
    \[ |f(x)| \leq c_a \exp\left\{ -\tfrac{\beta_0}{6}\left(|x+y|^2-2\right) \right\} \|f\|_\infty \] 
    where $\rho_{\beta}^a(x,y) = \rho_\beta^a(x,E_\lambda)$.
\end{cor}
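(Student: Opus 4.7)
The plan is to apply Theorem \ref{thm:main} directly and reduce the claim to a pointwise lower bound on the magnetic Agmon distance $\rho_{\beta,\lambda}^a(x, E_\lambda)$ coming from the quadratic growth of $\beta$. Since $\beta \geq \beta_0|\cdot|^2$ fails only on a compact set, the excess contributes a uniformly bounded deviation, which I absorb into an additive constant: without loss of generality $\beta(z) \geq \beta_0|z|^2 - M$ globally on $\bbR^2$ for some $M > 0$.

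Inserting this bound into the definition \eqref{eq:avg-curl} of $\bar\beta(\gamma(t))$, I would first carry out the deterministic $s$-integral. Expanding $|\Gamma(s,t)|^2 = s^2|\omega_x(t)|^2 + 2s(1-s)\,\omega_x(t)\!\cdot\!\gamma(t) + (1-s)^2|\gamma(t)|^2$ and using the elementary moments $\int_0^1 s^3\,ds = \tfrac14$ and $\int_0^1 s^2(1-s)\,ds = \int_0^1 s(1-s)^2\,ds = \tfrac{1}{12}$ yields
\[ \int_0^1 s\,|\Gamma(s,t)|^2\,ds \;=\; \tfrac{1}{12}\bigl(2|\omega_x(t)|^2 + |\omega_x(t)+\gamma(t)|^2\bigr) \;\geq\; \tfrac{1}{12}|\omega_x(t)+\gamma(t)|^2, \]
so $\int_0^1 \beta(\Gamma)\,s\,ds \geq \tfrac{\beta_0}{12}|\omega_x(t)+\gamma(t)|^2 - \tfrac{M}{2}$. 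Squaring, multiplying by the weight $|\omega_x(t)-\gamma(t)|^2$, and taking expectation, I would use $\bbE[\omega_x(t)] = x$ together with Jensen's inequality for the convex map $z\mapsto|z|^4$, combined with a Cauchy-Schwarz splitting to decouple the $|\omega_x - \gamma|^2$ factor from $|\omega_x+\gamma|^4$. This should give a lower bound of the form $\bar\beta(\gamma(t)) \gtrsim \beta_0^2\,|x+\gamma(t)|^4$, modulo lower-order terms controlled by $M$ and by the Brownian variance $\bbE|\omega_x(t)-x|^2 = 2t$.

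Taking square roots and evaluating the Agmon integrand \eqref{eq:mag-agmon-weight} along the straight-line path $\gamma(t) = (1-t)x + ty$ from $x$ to the minimizer $y \in \partial E_\lambda$, the path integral reduces to an elementary polynomial expression in $t$ that evaluates, after collecting constants, to $\tfrac{\beta_0}{6}(|x+y|^2 - 2)$, with the trailing $-2$ absorbing both the eigenvalue shift $2(\lambda - \nu_1/a^2)$ and the compact-set error $M$. The principal obstacle is handling the coupling inside the expectation defining $\bar\beta$: the factor $|\omega_x(t)-\gamma(t)|^2$ cannot be pulled out of the squared inner integral for free, so the Cauchy-Schwarz decoupling (or a direct Gaussian-moment computation using the Karhunen-Lo\`eve expansion of $\omega_x$) must be done carefully enough to preserve the sharp multiplicative constant $\tfrac16$. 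A secondary technical point is verifying that the infimizing path may be taken as the straight line and that the Agmon-minimizer $y$ is controlled by the Euclidean closest point in $E_\lambda$.
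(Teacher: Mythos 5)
Your reductions at the start are fine: absorbing the compact set into $\beta(z)\geq \beta_0|z|^2-M$ and the $s$-integral identity $\int_0^1 s\,|\Gamma(s,t)|^2\,ds=\tfrac1{12}\big(2|\omega_x(t)|^2+|\omega_x(t)+\gamma(t)|^2\big)$ agree with the computation in the paper (which, however, keeps the $|\omega_x|^2$-term rather than discarding it; see below). The serious problem is the last step, which runs the inequality in the wrong direction. Theorem \ref{thm:main} gives $|f(x)|\leq c_a e^{-\rho^a_{\beta,\lambda}(x,E_\lambda)}$, so the corollary requires a \emph{lower} bound on the infimum over \emph{all} paths; evaluating the weighted length along the straight line $\gamma(t)=(1-t)x+ty$ only produces an \emph{upper} bound on $\rho^a_{\beta,\lambda}$, which proves nothing, and your ``secondary technical point'' that the infimizing path may be taken to be the straight line is unjustified and false in general (a competing path can detour through regions where $\bar\beta-2(\lambda-\nu_1/a^2)\leq 0$ and pay nothing there). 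The paper's proof avoids this with a path-independent estimate: after reducing to $\sqrt{\big(\bar\beta(\gamma(t))-2(\lambda-\nu_1/a^2)\big)_+}\geq \tfrac{\beta_0}{6}|x+\gamma(t)|^2$ on the tail $t\in[\tau,1]$ of an \emph{arbitrary} path, it uses $\tfrac{d}{dt}|x+\gamma(t)|^2=2\langle\dot\gamma(t),x+\gamma(t)\rangle\leq 2|\dot\gamma(t)|\,|x+\gamma(t)|$, so that once $|x+\gamma(t)|\geq 2$ one has $|x+\gamma(t)|^2|\dot\gamma(t)|\geq \tfrac{d}{dt}|x+\gamma(t)|^2$, and the fundamental theorem of calculus converts the length integral into the endpoint quantity $|x+y|^2-2$. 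Some device of this kind, giving a bound depending only on the endpoint $y\in\partial E_\lambda$, is the essential missing idea in your plan.

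Two further steps would not go through as written. First, the decoupling: Cauchy--Schwarz bounds $\bbE[XY]$ from \emph{above}, so it cannot split $\bbE\big[\,|\omega_x(t)-\gamma(t)|^2\big(\int_0^1 s\,\beta(\Gamma(s,t))\,ds\big)^2\big]$ into a product from below; the paper instead invokes the correlation inequality $\bbE[XY]\geq\bbE[X]\,\bbE[Y]$ for these two factors and then Jensen on the inner integral. A direct Gaussian-moment computation, which you mention parenthetically, could substitute, but you have not carried it out, and as stated your proposed tool fails exactly at the point you yourself flag as the principal obstacle. Second, the claim that the trailing $-2$ ``absorbs both the eigenvalue shift $2(\lambda-\nu_1/a^2)$ and the compact-set error $M$'' cannot be correct: these are quantities independent of $\beta_0$, and $\lambda$ may be large. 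The paper handles the shift precisely by \emph{retaining} the $|\omega_x|^2$-contribution you throw away (the $|x|^2/3$-term), characterizing $E_\lambda$ in the transversal gauge so that $\gamma(t)\notin E_\lambda$ forces a lower bound on $|x|^2$ in terms of $\lambda$, and then choosing $a$ small; without such an argument the subtraction under the square root can annihilate your bound. Relatedly, dropping that term and treating $\bbE[|\omega_x(t)-\gamma(t)|^2]\geq 2t+(|x|-|\gamma(t)|)^2$ as merely ``lower order'' costs you the constant: your $\tfrac1{12}|\omega_x+\gamma|^2$ alone yields $\beta_0/12$, whereas the paper's $\beta_0/6$ comes from combining the $1/12$ with the factor $\big(2t+(|x|-|\gamma(t)|)^2\big)^{1/2}\geq 2$ on $[\tau,1]$, which requires the exit-time restriction you have not set up.
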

We can also consider cases where the magnetic field may not have a unique non-degenerate minima, e.g. if $\beta$ is concave. 
\begin{cor}\label{cor:concave}
    Assume $\beta\in \cC^0(\Omega)$ is a concave function. Then for any magnetic potential $A\in \cK_{\loc}(\bbR^2)$ with associated field $\beta$, we have for all $a>0$
    \[ |f(x)|\leq c_a\exp\left\{ \inf_{\gamma:x\mapsto y} -\int_0^1 \sqrt{ \tfrac{(\beta(\gamma(t)) - \inf \beta)^2}{36}(2t+ (|x|-|\gamma(t)|)^2) - 2(\lambda - \nu_1/a^2) } \, |\dot\gamma(t)| dt \right\} \|f\|_\infty \] 
\end{cor}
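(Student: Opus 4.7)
The plan is to deduce the corollary from Theorem \ref{thm:main} by establishing the pointwise lower bound
\[ \bar\beta(\gamma(t)) \geq \tfrac{(\beta(\gamma(t)) - \inf \beta)^2}{36}\bigl(2t + (|x|-|\gamma(t)|)^2\bigr), \]
since substituting this into the Agmon weight \eqref{eq:mag-agmon-weight} reproduces precisely the expression under the square root in the statement, after which Theorem \ref{thm:main} delivers the eigenfunction decay. The task therefore reduces to producing this estimate on $\bar\beta$ from its definition \eqref{eq:avg-curl}.

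For the inner integral, I set $\tilde\beta = \beta - \inf\beta \geq 0$, which is still concave, and apply the chord inequality along the straight line $\Gamma(s,t) = s\omega_x(t) + (1-s)\gamma(t)$: $\tilde\beta(\Gamma(s,t)) \geq (1-s)\tilde\beta(\gamma(t)) + s\tilde\beta(\omega_x(t)) \geq (1-s)\tilde\beta(\gamma(t))$, where the last step drops the non-negative Brownian contribution. Integrating against $s\,ds$ on $[0,1]$ yields
\[ \int_0^1 \tilde\beta(\Gamma(s,t))\, s\, ds \geq (\beta(\gamma(t))-\inf\beta)\int_0^1 s(1-s)\,ds = \tfrac{1}{6}(\beta(\gamma(t))-\inf\beta), \]
so after squaring and translating back to $\beta$ one extracts $\bigl(\int_0^1 \beta(\Gamma(s,t))\,s\,ds\bigr)^2 \geq \tfrac{1}{36}(\beta(\gamma(t))-\inf\beta)^2$. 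Crucially, this lower bound depends only on $\gamma$, so it pulls out of the expectation in \eqref{eq:avg-curl}.

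It then remains to control $\bbE[|\omega_x(t)-\gamma(t)|^2]$ from below. Using $\bbE[\omega_x(t)] = x$ together with $\bbE[|\omega_x(t)-x|^2] = 2t$ for standard 2d Brownian motion (generator $\tfrac{1}{2}\Delta$), a bias--variance decomposition gives $\bbE[|\omega_x(t)-\gamma(t)|^2] = 2t + |x-\gamma(t)|^2$, and the reverse triangle inequality $|x-\gamma(t)| \geq \bigl||x|-|\gamma(t)|\bigr|$ produces the factor $2t + (|x|-|\gamma(t)|)^2$. Multiplying this with the concavity bound from the previous paragraph yields the required estimate on $\bar\beta$, and inserting it into Theorem \ref{thm:main} closes the argument.

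The main technical obstacle is the squaring/shift step: concavity cleanly controls $\int_0^1 \tilde\beta(\Gamma)\, s\, ds$, but the quantity actually entering \eqref{eq:avg-curl} is $\int_0^1 \beta(\Gamma)\, s\, ds = \int_0^1 \tilde\beta(\Gamma)\, s\, ds + \tfrac{1}{2}\inf\beta$, and when $\inf\beta < 0$ this constant shift can cancel the positive chord contribution. To retain the $(\beta(\gamma(t))-\inf\beta)^2/36$ lower bound one must reinstate the non-negative Brownian term $s\tilde\beta(\omega_x(t))$ dropped earlier and exploit the joint expectation with $|\omega_x(t)-\gamma(t)|^2$ to absorb the shift; working through this coupling between the chord estimate and the Brownian spread is the crux of the argument.
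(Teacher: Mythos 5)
Your plan is the paper's plan, and the gap you flag at the end is real: once you shift to $\tilde\beta=\beta-\inf\beta$ and drop the Brownian chord contribution $s\tilde\beta(\omega_x(t))$, the quantity $\int_0^1\beta(\Gamma)\,s\,ds=\int_0^1\tilde\beta(\Gamma)\,s\,ds+\tfrac12\inf\beta$ can change sign, and a pointwise lower bound that is possibly negative tells you nothing about the square. Your proposed remedy---reinstate the Brownian term and exploit the joint expectation---is essentially what the paper does, but you have not executed it, and the execution is the entire content of the proof.

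Concretely, the paper never shifts $\beta$ and never drops $\beta(\omega(t))$. It applies the chord inequality $\beta(\Gamma(s,t))\geq s\beta(\omega(t))+(1-s)\beta(\gamma(t))$ inside the square and proceeds by: first factoring $\bar\beta(\gamma(t))\geq\bbE[X]\,\bbE[Y]$ with $X=\bigl(\int_0^1 s\beta(\Gamma)\,ds\bigr)^2$ and $Y=|\omega-\gamma|^2$ via the correlation inequality $\bbE[XY]\geq\bbE[X]\,\bbE[Y]$ (carried over from the proof of Corollary~\ref{cor:confine}); then Jensen to pass $\bbE$ inside the square; then integrating the chord bound in $s$ to reach $\bigl(\bbE[\beta(\omega(t))]+\beta(\gamma(t))\bigr)^2/36$; and finally invoking $\bbE[\beta(\omega(t))]\geq\inf\beta$. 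The Brownian term you discarded is precisely what produces the $-\inf\beta$ in the final $(\beta(\gamma(t))-\inf\beta)^2/36$ weight, so your attempt at a purely deterministic bound depending only on $\gamma$ was bound to fail. You should also be aware that the paper's chain has soft spots in exactly the place you worried about: the step $\bbE[XY]\geq\bbE[X]\,\bbE[Y]$ is not a generic fact and requires a positive-correlation input that is not verified, the chord inequality is inserted under a square (so $a\geq b\not\Rightarrow a^2\geq b^2$ unless signs are controlled), and the closing inequality $(\bbE[\beta(\omega)]+\beta(\gamma))^2\geq(\beta(\gamma)-\inf\beta)^2$ only goes through when $\inf\beta\geq 0$. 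So the sign-indefinite case you flagged is not cleanly resolved in the paper either; if you want a complete argument you should either assume $\beta\geq 0$, or track the sign of $\int_0^1 s\beta(\Gamma)\,ds$ explicitly.
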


We can also prove a `Carmona-type' pointwise estimate for magnetic eigenfunctions valid on $\bbR^2$ rather than simply a bounded domain $\Omega$, which is similar to those established in \cite{carmona1978pointwise}. Assume the magnetic field satisfies possesses a decomposition 
\[ \text{$\beta = W - U$ where $W\in L_{\loc}^p(\bbR^2)$ and $U\in L^\infty(\bbR^2)$, for $p>1$ and $U\geq 0$} \] 
and satisfying $W_\infty:=\inf_{y\in \bbR^2} W(y)>-\infty$. Similar hypotheses were used in \cite{carmona1978pointwise} to establish pointwise decay for Schr\"odinger eigenfunctions with a scalar potential $V$; as in the scalar case, this splitting allows for magnetic fields $\beta$ which are not of fixed sign, but are not so singular that we cannot apply the Feynman-Kac-It\'o formula to our corresponding operator $H(A)$. In particular, an application of Khasminskii's Lemma (see e.g. \cite[Prop 4.105]{lorinczi2011feynman}) implies that if $f\in L^p(\bbR^d)+L^\infty(\bbR^d)$ for $p>d/2$ then $f\in \cK(\bbR^d)$, as required.

\begin{prop}\label{thm:carmona-bdd}
Let $\beta$ be be as above, and fix $a>0$. Let $y\in \bbR^2$ be a point satisfying $W(y)<W_\infty+a$. Then for any $T>0$ and every path $\gamma:[0,T]\to \bbR^2$ from $x$ to $y$ there exist constants $C_1,C_{2,a}>0$ such that
\[ |f(x)| \leq C_1  e^{\lambda T} \left( C_{2,a} e^{-\tfrac{a^2T}{2}(W_\infty-\|U\|_{L^\infty})^2} e^{-\tfrac{\nu_1T}{a^2}-\frac{\dist(x,y)^2}{2T}} + e^{-\tfrac{T}{2}W_a^\gamma(x)}  \right) \|f\|_{L^\infty} \] 
where $W_a^\gamma(x) := \inf\{ W(y) :  |\gamma(t)-y|\leq a, t\in [0,T], \gamma(0)=x \}$ is the infimum of $W$ over an $a$-neighborhood of the curve $\gamma$.
\end{prop}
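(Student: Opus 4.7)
My plan is to adapt the classical Carmona strategy \cite{carmona1978pointwise} to the magnetic setting by combining the Feynman-Kac-It\^o formula with the Pauli operator nonnegativity $H(A) + \tfrac12\beta \geq 0$ to reduce to a scalar Brownian tail estimate, and then to decompose the Brownian paths into two regimes relative to the reference curve $\gamma$.

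The starting point is the Feynman-Kac-It\^o representation applied to $H(A)f = \lambda f$, which is valid under the hypothesis $A \in \vec{\cK}_{\loc}(\bbR^2)$:
\[
f(x) \;=\; e^{\lambda T}\, \bbE_x\!\left[e^{-i S_T(A,\omega)}\, f(\omega_x(T))\right],
\]
with $S_T(A,\omega)$ the Stratonovich line integral of $A$ along the Brownian path. To bring the scalar function $W$ into the expectation, I would use the Pauli operator nonnegativity $H(A) + \tfrac12 \beta \geq 0$; combined with $\beta = W - U$ and $U \geq 0$, this gives $H(A) + \tfrac12 W \geq 0$, and Simon's diamagnetic inequality \cite{simon1979functional} applied to the magnetic operator $H(A) + \tfrac12 W$, together with the FK-I representation for $e^{-T H(A)}$, yields a kernel bound of the form
\[
|f(x)| \;\leq\; e^{\lambda T}\, e^{T \|U\|_\infty/2}\, \|f\|_\infty\, \bbE_x\!\left[e^{-\tfrac12 \int_0^T W(\omega_x(s))\, ds}\right].
\]

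From here I would apply the Carmona path decomposition. Let $\sigma = \inf\{t \in [0,T] : |\omega_x(t) - \gamma(t)| > a\} \wedge T$. On the event $\{\sigma = T\}$ the Brownian path stays in the $a$-tube around $\gamma$, so $W(\omega_x(s)) \geq W_a^\gamma(x)$ for all $s$, producing the clean contribution $\|f\|_\infty\, e^{-T W_a^\gamma(x)/2}$, which is the second summand of the claimed bound. On $\{\sigma < T\}$ the path exits the tube; I would apply the strong Markov property at $\sigma$, use the two-dimensional Brownian transition density $(2\pi T)^{-1} e^{-|x-y|^2/(2T)}$ to produce the Gaussian factor $e^{-\dist(x,y)^2/(2T)}$, and use the first Dirichlet eigenvalue of $-\tfrac12 \Delta$ on a disc of radius $a$ (which equals $\nu_1/a^2$ after rescaling) to produce the confinement factor $e^{-\nu_1 T/a^2}$.

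The step I expect to be most delicate is extracting the \emph{quadratic} dependence $(W_\infty - \|U\|_\infty)^2$ in the first summand: the diamagnetic domination above is only linear in $W_\infty - \|U\|_\infty$. The squared enhancement should come either from a Khasminskii-type concentration estimate on $\int_0^T W(\omega_x(s))\, ds$ once the Brownian motion is localized in the disc of radius $a$ around $y$ (exploiting the hypothesis $W(y) < W_\infty + a$), or from a sharper ground-state lower bound for $H(A)$ restricted to such a disc in the regime $\beta \geq W_\infty - \|U\|_\infty$, analogous to the Landau-level calculation appearing in the uniform-field example following Theorem \ref{thm:main}.
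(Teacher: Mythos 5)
Your proposal diverges from the paper's argument at the key mechanistic step, and the divergence opens a gap that you partly flag yourself. The paper's proof of Proposition~\ref{thm:carmona-bdd} is a direct continuation of the machinery in Theorem~\ref{thm:main}: the Feynman--Kac--It\^o phase $\exp(i\int_0^T A\circ d\omega)$ is rewritten via the Ikeda--Manabe stochastic Stokes formula as a phase involving a stochastic area integral of $\beta$, which is (a time-changed) Gaussian random variable, and its absolute expectation is then evaluated \emph{exactly} as $e^{-\sigma^2/2}$ with
$\sigma^2 = \int_0^T \bbE\bigl[(\int_0^1 s\,\beta(\Gamma(s,t))\,ds)^2\,|\omega(t)-\gamma(t)|^2\bigr]\,dt$.
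That variance is manifestly quadratic in $\beta$, and it is from here — not from any dominated scalar semigroup — that the factor $(W_\infty-\|U\|_{L^\infty})^2$ arises. The rest of the paper's proof is close to your second paragraph: a Hodge--Kodaira decomposition produces a divergence-free gauge in $\vec\cK_{\loc}$ so the FK--I formula applies, and the expectation is split over $A=\{\sup_{t\le T}|\omega(t)-\gamma(t)|<a\}$ and its complement using the Fujita small-ball probability $\bbP(A)\sim C_a e^{-\nu_1 T/a^2-\tfrac12\int|\dot\gamma|^2}$, so your invocation of the unit-disc eigenvalue and the Gaussian factor $e^{-\dist(x,y)^2/2T}$ is in the right spirit.

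The genuine gap is the reduction you propose in the first step. The Pauli nonnegativity $H(A)+\tfrac12\beta\ge 0$ together with the diamagnetic inequality does not yield the scalar bound
$|f(x)|\le e^{\lambda T}e^{T\|U\|_\infty/2}\|f\|_\infty\,\bbE_x\bigl[e^{-\tfrac12\int_0^T W(\omega_s)\,ds}\bigr]$.
The diamagnetic inequality dominates $|e^{-tH(A)}u|$ pointwise by the \emph{free} heat semigroup $e^{t\Delta/2}|u|$, which discards $A$ entirely and reproduces only the free Gaussian kernel bound. The form inequality $H(A)+\tfrac12 W\ge 0$ is a statement about quadratic forms, not a pointwise semigroup domination, and $f$ is not an eigenfunction of $H(A)+\tfrac12 W$, so there is no FK representation placing $W$ in a scalar exponent acting on $f$. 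Once that reduction fails, the Carmona decomposition has nothing to feed on: the exit-time split and the $W_a^\gamma$ tube estimate presuppose a scalar potential in the exponent, which never appears. This is also why, as you correctly sense, you cannot reach the quadratic factor $(W_\infty-\|U\|_{L^\infty})^2$ by Khasminskii-type concentration or by a Landau-level comparison on a small disc: those refine an existing scalar exponent, but your chain of inequalities has no $W$-dependent exponent to refine. The missing idea is the Gaussian identity for the stochastic area integral, i.e.\ working directly with the phase and its variance rather than attempting to dominate away the phase.
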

In this splitting, the $L^p$ part of $\beta$, $W(x)$, is of indefinite sign, and thus can contribute to the most decay. By contrast the $L^\infty$ part, $U$, plays the role of ``worst possible growth" of the field $\beta$ on $\bbR^2$. It is the balancing of these two terms which gives this global decay estimate. In contrast to the usual Carmona-type estimate \cite[Lemma 3.1]{carmona1978pointwise}, there is more dependence on a path $\gamma$ from the starting point to one approaching $\inf W$. This path dependence also arises through the function $W_a^\gamma(x)$ in the statement denotes a function of both the path $\gamma$ and the $L^p$-part of $\beta$; this quantity represents the smallest value $W(y)$ approaches on an $a$-neighborhood of the path $\gamma$.  

\subsection{Related work}

In the `electric case' of a scalar potential $V$, the original Agmon metric was first introduced by the pioneering work Agmon in \cite{agmon}, and was prefigured by earlier work of Lithner \cite{lithner1964theorem}. The probabilistic approach to eigenfunction estimates which we employ in this paper was first popularized for the case of scalar potentials by Carmona \cite{carmona1978pointwise}, and rapidly applied to a study of the Agmon metric by Carmona \& Simon \cite{carmona-simon}. The latter work uses these methods to deduce that Agmon's estimate produces an essentially sharp lower bound for the ground state; the main reason why the method is restricted to the ground state is that one wants to avoid possible cancellation in the path integral and the ground state does not change sign. Already this analysis illustrates new subtleties that the purely magnetic case presents: because $H(A)$ eigenfunctions are $\bbC$-valued one cannot make such conclusions about the ground state having fixed sign.

As mentioned previously there is an enormous body of work analyzing the spectral theory of magnetic Schr\"odinger operators from various perspectives. As early as the work of Avron-Herbst-Simon \cite{avron1978schrodinger}, there have been indications of the role the magnetic field plays in localization. Namely they proved for an $L^2$-normalized magnetic eigenpair $H(A)\psi_n=\lambda_n\psi_n$ that $\int_\Omega \beta(x)|\psi_n(x)|^2dx\leq \lambda_n$, which implies that most of the $L^2$-mass of an eigenfunction should be contained in $\{ \beta(x)\leq \lambda_n \}$, provided that $\beta\geq 0$. Proceeding along these lines, one can expect an Agmon estimate in 2 dimensions with respect to $(B(x)-\lambda_n)_+$ for such non-negative fields, and this fact appears to be folklore among experts. In contrast, in the present work we make no assumption about the sign of the magnetic field.

Later works deduced an Agmon type estimate for the operator $L_{A,V}=H(A)+V$ are those of Helffer-Nourrigat \cite{helffer-nourrigat}; they assume that $A,V$ are polynomials, and combine this hypothesis with nilpotent Lie group techniques to derive their conclusions. Later works including \cite{brummelhuis1991exponential, fournais-helffer, helffer-morame, helffer1988effet} use semiclassical analysis to construct local model operators, by either exploiting assumptions about the structure of $A$ or $\beta$, as well as the geometry of $\Omega$, in order to construct normal forms for $H(A)$ (resp. $L_{A,V}$) and from these derive corresponding asymptotic approximations (e.g. WKB constructions) of their corresponding eigenfunctions. Along different lines, the work of \cite{poggi, poggi-mayboroda} proceeds by generalizing assumptions of \cite{shen1998bounds} made in the 2d case, which assume that the magnetic field $\beta$ has a ``strongly favored direction"; this hypothesis is combined with harmonic analysis techniques to obtain a variant of the Fefferman-Phong type uncertainty principle \cite{fefferman1983uncertainty, shen1996eigenvalue} with respect to a modified Filoche-Mayboroda landscape function \cite{filoche2012universal}, and from this conclude in turn that the fundamental solution decays exponentially. For both these works and the semiclassical approaches mentioned previously, Agmon-type decay for $L_{A,V}$ is often a consequence of the scalar potential $V$ dominating the magnetic field $\beta$ in some way, and reducing to the well-known scalar Agmon estimates for $V$.

In recent years, in order to directly confront the effects of the magnetic potential $A$, there has been a growing interest in the analysis of the purely magnetic Schr\"odinger operator $H(A)$. Most relevant and most recent are the results \cite{alfa2022tunneling, bonthonneau2021exponential, bonnaillie2022purely, helffer2022quantum, Fournais-ch, fournais2023purely} which appear as some of the few prior known results providing magnetic Agmon type localization estimates. These results all follow along similar lines as the semiclassical approaches described above, in exploiting hypotheses about the fine structure of the field $\beta$ and the geometry of the domain $\Omega$. 

\section{Background and probabilistic preliminaries}

We begin first with an informal discussion on the role of gauge invariance in the analysis of $H(A)=\tfrac12(-i\nabla -A(x))^2$, and the role it suggests should be played by the magnetic flux of $\beta=\text{curl }A$. Afterwards we fix  notation and give a short compilation of the function classes of vector potentials $A$ which we will consider, and recall the Feynman-Kac-It\'o formula Thm \ref{thm:broderix} used throughout.

\subsection{Gauge invariance and Green's theorem}\label{sec:gauge}

Let $\Omega\subset \bbR^2$ be a bounded domain with smooth boundary, and impose Dirichlet boundary conditions on the magnetic Laplacian $H(A)$, associated to a vector field $A:\Omega \to \bbR^2$. This operator has, under mild regularity assumptions on $A$, a spectral resolution by countably many eigenpairs $(f_j,E_j)_{j=0}^\infty$ corresponding to purely discrete spectrum, with $f_j: \Omega \to \bbC$. 

A fundamental property possessed by $H(A)$ is a notion of gauge-invariance with respect to modulating the phase of an eigenfunction: for any choice of $\varphi\in \CI(\Omega,\bbR)$ we have
\[ e^{i\varphi}H(A)e^{-i\varphi} = H(A-\nabla \varphi), \] 
hence the spectrum is invariant with respect to this change, while eigenfunctions are modified only by some (possibly non-trivial phase): $f\mapsto e^{-i\varphi} f$. In particular the pointwise norm $|f(x)|$ of $H(A)$ eigenfunctions are preserved by this gauge change.

Now given a domain $\Omega$ that is simply-connected, if the magnetic field $\beta=\pa_1A_2-\pa_2A_1$ is identically zero throughout $\Omega$ then we must have that $A=\nabla \varphi_0$ for some $\varphi_0\in \CI(\Omega)$. Thus $H(A)$ is gauge equivalent to $-\tfrac12\Delta$, and hence the eigenfunctions of $H(A)$ satisfy the same pointwise bounds as those of $-\tfrac12\Delta$ (with the given boundary conditions). On the other hand, what if $\beta\neq 0$? We now turn to the heat kernel to say more in this case of non-trivial magnetic field $\beta$. \\

On all of $\bbR^d$ we have an explicit formula \cite[Eqn. 4.4.8]{lorinczi2011feynman}, for the magnetic heat kernel semigroup. Namely for $t>0$, $e^{-tH(A)}:L^2(\bbR^d)\to L^2(\bbR^d)$ has integral kernel given as
\begin{equation}\label{eq:mag-heat-kernel}
    \kappa_t(x,y) = (2\pi t)^{-d/2}\exp\left( \frac{-|x-y|^2}{2t} \right) \exp\left( -\frac{i}{2}[A(x)+A(y)]\cdot (y-x) \right) . 
\end{equation} 
The first term is the celebrated Euclidean heat kernel, and its appearance is unsurprising; more interesting is the second quantity which plays the role of an ``infinitesimal parallel transport". Namely, the relative phase in this kernel function at a pair of points can be expressed as the imaginary exponential of a real line integral along the straight line path $\mathbf{r}(t)=x+t(y-x)$, 
\[ \frac{A(x)+A(y)}{2}\cdot (y-x) \approx \int_0^1 A(\mathbf{r}(t))\cdot \mathbf{r}^\prime(t)dt = \int_{\mathbf{r}}A\cdot d\mathbf{s}, \] 
which suggests this new quantity in our magnetic heat kernel is roughly of the form $\exp\left(-i\int_x^y A\cdot d\mathbf{s}\right)$. We have approximated this line integral by the average of its endpoints, which is more accurate for points $x$ and $y$ which are closer. Points which are further away can contribute possibly opposing phases, but must also be exponentially damped by the Euclidean heat kernel factor in $\kappa_t(x,y)$.

However, this line integral of $A$ was computed along the straight line path connecting $x$ and $y$. If $\gamma$ is any other $C^1$ path connecting these points then Green's theorem implies
\begin{equation}\label{eq:greens-thm}
    \int_{\gamma}A\cdot d\mathbf{s} = \int_{\mathbf{r}} A\cdot d\mathbf{s} + \iint_{D(\mathbf{r},\gamma)} \beta \, dxdy 
\end{equation}  
where $D(\mathbf{r},\gamma)\subset \bbR^2$ is the positively oriented domain enclosed by the union of $\gamma$ and $-\mathbf{r}$ (the straight line path with opposite orientation). This is the second appearance of the magnetic field $\beta$, and again suggests why the norm of the magnetic field should dominate eigenfunction behavior. Namely, if the shortest path $\mathbf{r}(t)$ connecting $x$ to $y$ passes through or near regions where $\{|\beta|\gg 0\}$ then distinct paths can contribute very different and possibly destructively interfering phases to $\kappa_t(x,y)$.

\begin{figure}[h]%
    \centering
    {{\includegraphics[width=4cm]{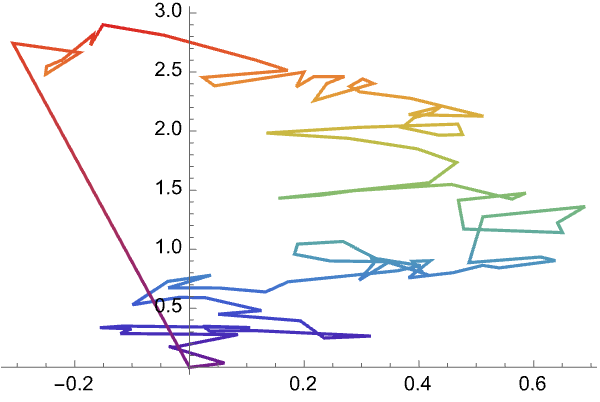} }}%
    \qquad
    {{\includegraphics[width=4cm]{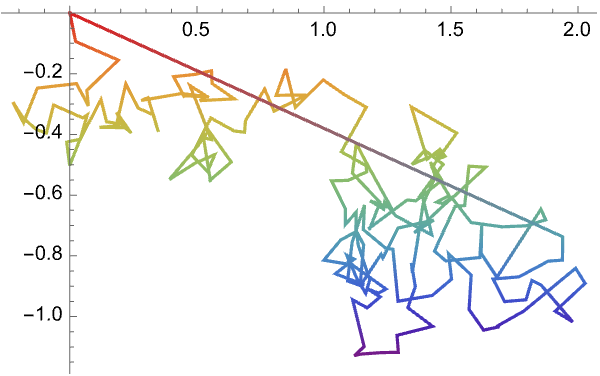} }}%
    \caption{Boundaries of two different domains formed by closing a planar Brownian motion $\omega$ with a straight line between its start and endpoints. The $\beta$ integral of \eqref{eq:greens-thm} represents the enclosed area of such regions when $\mathbf{r}\mapsto \omega$}%
    \label{fig:1}%
\end{figure}

The previous discussion suggests that for an arbitrary potential $A$ we should consider the contribution of the ``average magnetic flux"  along all paths $\gamma$ homotopic to the shortest path $\mathbf{r}$. First we express the domain $D(\mathbf{r},\gamma)$ via a homotopy between the two paths, 
\[ \Gamma:[0,1]\times [0,T]\to D(\gamma,\mathbf{r})\subset \Omega, \quad \Gamma(s,t) = s\mathbf{r}(t)+(1-s)\gamma(t), \] 
so by a change of variables the integral can also be expressed in terms of this homotopy,
\begin{align*}
    \iint_{D(\gamma,\mathbf{r})} \beta \, dx &= \int_0^1 \int_0^1 \beta(\Gamma(s,t)) \, |\text{det}D\Gamma|(s,t)\,dsdt \\
    & = \int_0^1 \int_0^1 \beta(\Gamma(s,t)) \, [\pa_s \Gamma(s,t)\times \pa_t \Gamma(s,t)] \, dsdt , 
\end{align*} 
where we denote $\mathbf{v}\times \mathbf{w} = \mathbf{v}\cdot J\mathbf{w} := v_1w_2-v_2w_1$. 

So far the discussion has been purely formal; to convert these physical heuristics into a rigorous argument, we need to make more precise in what sense we can ``average over all domains determined by paths homotopic to $\gamma$". The proper framework for this is by reinterpreting the quantities in \eqref{eq:greens-thm} as stochastic integrals; this can be done via the Feynman-Kac-It\'o formula for the magnetic heat kernel \ref{thm:broderix}, which we will introduce after some probabilistic preliminaries.

\subsection{Preliminaries on stochastic line integrals}\label{sec:stochastic}

We now begin gathering some of the crucial definitions for our results.

\begin{definition}
A vector potential $A:\Omega\to \bbR^d$ is said to lie in the vectorial Kato class $A\in \vec{\cK}(\bbR^d)$ if its squared norm $|A|^2$ and divergence $\Div A$, considered as a distribution on $\CI_0(\bbR^d)$, are both elements of the Kato class $\cK(\bbR^2)$. A scalar function $V\in \cK(\bbR^d)$ lies in the Kato class if,
\[ \lim_{r\to 0+} \sup_{x\in \bbR^2} \int_{\bbR^d} g_r(x-y)|V(y)| dy=0  \]
where $g_r$ is the Coulomb potential outside the ball $B_r(0)$, 
\[ g_r(x):=\chi_{B_r(0)}\begin{cases}
    -\ln|x|, & d=2 \\
    |x|^{2-d} & d\geq 3
\end{cases} \; . \]
In both cases a vector potential $A$ or scalar function $V$ belongs to their local Kato classes $\vec{\cK}_{\loc}(\bbR^d)$, $\cK_{\loc}(\bbR^d)$ if $A\chi_K$, $V\chi_K$ are Kato for every compact $K\subset \bbR^d$.
\end{definition}

There is an alternate definition of these function spaces, as a result of \cite[Thm 4.5]{aizenman1982brownian}, which states that the Kato class can be characterized via Brownian motion,
\begin{equation}\label{eq:kato-crit}
    V\in \cK(\bbR^d) \iff \lim_{t\to 0^+} \sup_{x\in \bbR^d}\bbE\left[ \int_0^t |V(\omega_x(t'))|dt' \right] = 0 . 
\end{equation}  
A different but similarly useful characterization of $\cK(\bbR^d)$ is in terms of the spaces $L_{\text{unif,loc}}^p(\bbR^d)$; for a (possibly unbounded) open subset $\Omega\subset \bbR^d$ we define
\[ ||V||_{L_{\text{unif,loc}}^p(\Omega)} := \begin{cases}
    \sup\limits_{x\in \Omega} \left(\int_{\Omega} |V(x)|^p \, \chi_{B_1(x-y)} dy \right)^{1/p} & 1\leq p<\infty \\
    \esssup\limits_{x\in \Omega} |V(x)| & p=\infty . 
\end{cases}  \] 
Having defined these uniformly locally $p$th-power integrable functions we state the following useful inclusions, for $p>\tfrac{d}{2}$ we have
\[ L_{\text{unif,loc}}^p(\bbR^d) \subset \cK(\bbR^d) \subset L_{\text{unif,loc}}^1(\bbR^d)  \] 
\[ L_{\text{loc}}^p(\bbR^d) \subset \cK_{\loc}(\bbR^d) \subset L_{\text{loc}}^1(\bbR^d) , \] 
which may be more useful than the criterion \eqref{eq:kato-crit}.

With these preliminaries we can now recall 
\begin{thm}[Prop 2.3\cite{broderix2000continuity}]\label{thm:broderix}
Let $A\in \vec{\cK}_{\loc}(\bbR^d)$, and $\Omega\subset \bbR^d$ open. Then for any $\psi\in L^2(\Omega)$ and $t\geq 0$, one has that the Dirichlet magnetic heat kernel on $\Omega$ satisfies
    \[ e^{-tH(A)}\psi(x) = \bbE\left[ \psi(\omega(t)) e^{i\int_0^t A(\omega(t'))\cdot d\omega(t') - \tfrac{i}{2}\int_0^t \Div A(\omega(t)) dt }\;\Xi_{\Omega,t}(\omega) \right] \]
where $\omega(t)$ is a Brownian motion started at $\omega(0)=x$, and 
    \[  \Xi_{\Omega,t}(\omega):=\begin{cases} 1 & \omega(t')\in \Omega, \,  0\leq t'\leq t \\
        0 & \text{else}
    \end{cases} \]
\end{thm}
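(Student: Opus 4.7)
Because Theorem \ref{thm:broderix} is attributed to \cite{broderix2000continuity}, my plan would be to reconstruct the proof in the standard two-stage way: verify the identity by It\^o calculus when $A$ is smooth, then pass to the Kato class by mollification and semigroup convergence.

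\emph{Smooth case via It\^o's lemma.} Assume first that $A\in C_c^\infty(\bbR^d,\bbR^d)$ and $\psi\in C_c^\infty(\Omega)$. Define
\[
u(t,x):=\bbE^x\!\left[\psi(\omega(t))\exp\!\Big(i\!\int_0^t\! A(\omega)\cdot d\omega-\tfrac{i}{2}\!\int_0^t\!\Div A(\omega)\,ds\Big)\Xi_{\Omega,t}(\omega)\right].
\]
Apply It\^o's formula to the product $M_s=\psi(\omega(s))e^{i\Phi_s}$ with $\Phi_s$ the bracketed phase. The essential bookkeeping is: the quadratic variation of $i\Phi$ contributes $(iA\cdot d\omega)^2=-|A|^2\,ds$; the explicit $\tfrac{i}{2}\Div A$ correction in $\Phi$ cancels the It\^o\,--Stratonovich correction for $\int A\circ d\omega$; and the cross-term $(d\psi)(de^{i\Phi})$ contributes $ie^{i\Phi}A\cdot\nabla\psi\,ds$. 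Collecting all terms, the drift of $d(\psi e^{i\Phi})$ reassembles into $-e^{i\Phi}H(A)\psi\,ds$ with $H(A)=\tfrac12(-i\nabla-A)^2$. Taking expectations kills the martingale part; the stopping factor $\Xi_{\Omega,s}$ enforces Dirichlet boundary conditions by killing the process upon first exit from $\Omega$. Hence $u$ satisfies $\partial_t u=-H(A)u$ on $\Omega$ with $u(0,\cdot)=\psi$ and $u|_{\partial\Omega}=0$, and uniqueness of the parabolic Dirichlet problem identifies $u(t,\cdot)=e^{-tH(A)}\psi$.

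\emph{Extension to Kato class by approximation.} For general $A\in\vec{\cK}_{\loc}(\bbR^d)$ I would mollify $A_\epsilon=A*\rho_\epsilon$ and apply the smooth case to $A_\epsilon$. The Aizenman--Simon characterization \eqref{eq:kato-crit} applied to $|A|^2$ and $\Div A$ gives
\[
\sup_{x\in\bbR^d}\bbE^x\!\left[\int_0^t(|A_\epsilon(\omega)|^2+|\Div A_\epsilon(\omega)|)\,ds\right]\ \longrightarrow\ 0\quad\text{as }t\to 0^+,
\]
uniformly in $\epsilon$. This uniform smallness upgrades to $L^1$ convergence of the exponential (via Khasminskii's lemma) and gives convergence in probability of the It\^o integral $\int A_\epsilon\cdot d\omega$ and the additive functional $\int \Div A_\epsilon(\omega)\,ds$ to their $\epsilon=0$ limits. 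On the operator side, quadratic-form convergence of the magnetic forms yields $H(A_\epsilon)\to H(A)$ in the strong resolvent sense, hence $e^{-tH(A_\epsilon)}\psi\to e^{-tH(A)}\psi$ in $L^2$. Both sides of the It\^o identity thus pass to the limit.

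\emph{Main obstacle.} The crux is the Kato-class step: making sense of $\int_0^t A(\omega)\cdot d\omega$ and $\int_0^t \Div A(\omega)\,ds$ when $A$ is only locally in $\vec{\cK}$ and $\Div A$ a priori merely a distribution. The Kato hypothesis on $|A|^2$ is precisely the Kunita--Watanabe $L^2$-integrability ensuring the stochastic integral exists as a semimartingale along Brownian paths; the Kato hypothesis on $\Div A$ plays the analogous role for the additive functional, defining it as an a.s.\ limit of smoothed versions. Matching the It\^o\,--Stratonovich conventions so that the $\tfrac{i}{2}\Div A$ correction exactly cancels in the drift, uniformly along the approximation, is where most of the technical effort of \cite{broderix2000continuity} lies; this is also what pins down the characterization of the natural form domain of $H(A)$ as the one produced by the approximation, so that the semigroup on the right-hand side of the It\^o identity is unambiguously $e^{-tH(A)}$ in the Dirichlet sense.
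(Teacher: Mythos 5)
This theorem is quoted from the literature (Broderix--Hundertmark--Leschke, Prop.\ 2.3) and the paper contains no proof of it, so there is no internal argument to compare against; your reconstruction can only be judged against the cited source and against internal consistency. That said, your two-stage plan --- verify the identity by It\^o calculus for smooth compactly supported $A$, then pass to $\vec\cK_{\loc}$ by mollification, uniform Kato-smallness, Khasminskii's lemma, and strong resolvent convergence of the form sums --- is a faithful skeleton of how such Feynman--Kac--It\^o formulas are established, and you correctly identify where the real technical weight lies (giving meaning to $\int A\cdot d\omega$ and to $\int \Div A\,ds$ as additive functionals for Kato-class data, and pinning down the form domain of $H(A)$ so that the semigroup on the left is unambiguous). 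A small upgrade worth making explicit in the smooth step: after Jensen's inequality, $|A_\epsilon|^2\le |A|^2*\rho_\epsilon$ and $\Div A_\epsilon=(\Div A)*\rho_\epsilon$, so the Kato smallness of \eqref{eq:kato-crit} transfers to the mollifications uniformly in $\epsilon$; this is what makes Khasminskii usable along the approximation.

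One concrete issue your It\^o bookkeeping would expose if you carried it out: with the paper's convention $H(A)=\tfrac12(-i\nabla-A)^2$, applying It\^o to $\psi(\omega)e^{i\Phi}$ with $\Phi=\int_0^t\alpha\cdot d\omega+\int_0^t\beta\,ds$ gives drift $e^{i\Phi}\big[\tfrac12\Delta\psi+i\alpha\cdot\nabla\psi+i\beta\psi-\tfrac12|\alpha|^2\psi\big]$, and matching this with $-H(A)\psi=\tfrac12\Delta\psi-iA\cdot\nabla\psi-\tfrac{i}{2}(\Div A)\psi-\tfrac12|A|^2\psi$ forces $\alpha=-A$ and $\beta=-\tfrac12\Div A$, i.e.\ the phase should read $e^{-i\int_0^t A\cdot d\omega-\tfrac{i}{2}\int_0^t\Div A\,ds}=e^{-i\int_0^t A\circ d\omega}$. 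The phase as displayed in the theorem statement, $e^{i\int A\cdot d\omega-\tfrac{i}{2}\int\Div A\,ds}$, has mismatched signs and does not produce $-H(A)$ (nor $-H(-A)$) in the drift; it is most naturally read as a typo for the Stratonovich phase with both signs flipped. You describe the $\tfrac{i}{2}\Div A$ term as ``cancelling the It\^o--Stratonovich correction,'' which is the right intuition, but only lines up once the signs are made consistent. Since the paper only ever estimates $\bigl|\bbE[\cdot]\bigr|$, this sign has no effect on Theorem \ref{thm:main} or its corollaries, but a complete reconstruction of the It\^o step should fix it.
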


\subsection{Basic idea of the proof} The connection between pointwise eigenfunction estimates and the stochastic integrals studied arises from the Feynman-Kac-It\'o formula: if $f$ is an eigenfunction of $H(A)$ then it satisfies $e^{-tH(A)}f(x)=e^{-t\lambda}$, which combined with Thm \ref{thm:broderix} implies
\[ f(x) = e^{t\lambda} \, \bbE\left[ f(\omega(t)) e^{i\int_0^t A(\omega(t'))\cdot d\omega(t') - \tfrac{i}{2}\int_0^t \Div A(\omega(t)) dt } \right] ,  \] 
thus estimates of this expectation become estimates of the eigenfunction.

After gauge-fixing such that $\Div A\equiv 0$, the object of interest for us is ultimately
\begin{equation}\label{eq:char-fcn}
     \bbE [e^{i\int_0^t A(\omega)\cdot d\omega}]. 
\end{equation}
The quantity in the exponential is an It\'o integral, which is up to a time-rescaling, representable as Brownian motion, i.e. 
\[ \forall t, \quad \int_0^t A(\omega_x(t'))\cdot d\omega_x(t') \sim N\left(0,\int_0^t \bbE\left[|A|^2(\omega_x(t'))\right] \, dt'\right) . \] 
So this stochastic line integral represents a process which for all times $t>0$ is normally distributed with mean zero and variance $\int_0^t \bbE\left[|A|^2(\omega_x(t'))\right] \, dt'$. In other words, the function in \eqref{eq:char-fcn} is, for each $t$, precisely the characteristic function of a normal distribution. For an arbitrary normally distributed random variable $N(\mu,\sigma^2)$ this has the well-known formula
\[ \bbE[e^{i\zeta N(\mu,\sigma^2)}] = e^{i\zeta\mu - \tfrac12 \zeta^2\sigma^2} .  \]
In the case of the It\'o integral arising from our potential $A$, this has no expectation and it is the variance which drives exponential decay. This idea is at the core of Thm \eqref{thm:main}; we get yet better exponential decay by first replacing the integral of $A$ with the stochastic analogue of \eqref{eq:greens-thm} to get exponential decay depending on stochastic double integrals the magnetic field $\beta$. 

\section{Uniform Magnetic fields and L\'evy's stochastic area formula}\label{sec:phys-levy}

In this section we explain further an example from the introduction, the magnetic Sch\"odinger operator with a constant magnetic field $\beta(x) \equiv \beta_0\in \bbR_{>0}$. In addition to confirming the calculation previously presented, we can also use this example to highlight the role played by $\bar\beta(\gamma(t))$, and relate it to the celebrated L\'evy stochastic area formula.

If $\beta(x)\equiv \beta_0$, by gauge invariance we are free to assume that our magnetic Schr\"odinger operator $H(A)$ derives from a specific magnetic potential, the so-called Landau gauge, $A=\frac{\beta_0}{2}(-x_2,x_1)$. The corresponding magnetic Schr\"odinger operator is of the form
\[ H(A_0) = -\tfrac12 \Delta - i \tfrac{\beta_0}{4} (x_1\pa_2 - x_2\pa_1) + \tfrac{\beta_0^2}{8}|x|^2, \]
and has explicitly computable eigenvalues/eigenfunctions. We consider just the ground state eigenfunction $f_0$, which has eigenvalue $\tfrac{\beta_0}{2}$, and the form,
\[ f_0 = \tfrac{1}{\pi} e^{-\frac{\beta_0}{2}|x|^2}. \] 
Because $\beta$ is constant we can write $\bar\beta(\gamma(t)) = \tfrac{\beta_0^2}{4}\bbE[|\gamma(t)-\omega(t)|^2]$. Further, since $\bbE[|\omega(t)|^2] = |x|^2 + 2t$, and $\bbE[\omega(t)]=x$ we have
\begin{align*}
   \bbE[|\gamma(t)-\omega(t)|^2] & = |\gamma(t)|^2 - 2\langle \gamma(t), \bbE[\omega(t)]\rangle +\bbE[|\omega(t)|^2] \\
   & \geq |\gamma(t)|^2 - 2 |\gamma(t)|\, |x|  + |x|^2+ 2t  = (|x|-|\gamma(t)|)^2 + 2t
\end{align*}   
Now we introduce two relevant times for the path: because $\omega(0)=\gamma(0)=x$, the quantity $\bar\beta(\gamma(t))$ is close to zero near $t=0$, so not all of the integration path is expected to contribute to the integral. We define the exit times,
\[ \text{  $t_F=\sup_{t>0}\left\{ \big||x|-|\gamma(t)|\big| \leq \frac{\dist(x,y)}{2}\right\}$ and $t_0 = \sup_{t>0} \left\{ t \leq \frac{2(\beta_0-2\nu_1)}{\beta_0^2} \right\}$  } \]
and we set $\tau=\max\{t_0,t_F\}$, the first time where positivity of the integrand is guaranteed on the subinterval $[\tau,1]$. Namely
\begin{align*}
    \rho_{\beta,\lambda}(x,E_\lambda) & \geq \int_0^1 \sqrt{\left(\tfrac{\beta_0^2}{4}\bbE[|\gamma(t)-\omega(t)|^2] - (\beta_0-2\nu_1) \right)_+ } \, |\dot\gamma(t)|\, dt  \\
    & \geq \int_{\tau}^1 \sqrt{\tfrac{\beta_0^2}{4}(|x|-|\gamma(t)|)^2+2t) - (\beta_0-2\nu_1)} \, |\dot\gamma(t)| \, dt \\
    & \geq \tfrac{\beta_0}{2}\int_{\tau}^1 \big||x|-|\gamma(t)|\big| \,|\dot\gamma(t)|dt ;  
\end{align*}  
In the second inequality we have used that the integrand is strictly positive since for all time $t\in [\tau,1]$ since $t\geq\tfrac{2(\beta_0-2\nu_1)}{\beta_0^2}$, using $\tfrac{2(\beta_0-2\nu_1)}{\beta_0^2}<.12$ since $2\nu_1\approx 4.8$.  

The inequality that $\rho_{\beta}^1(x,E_\lambda)\geq \tfrac{\beta_0}{8}\dist(x,E_\lambda)^2$ will finally follow from the mean value theorem: there exists $t^*\in [\tau,1]$ satisfying
\begin{align*}
    \rho_{\beta,\lambda}(x,E_\lambda) &\geq \tfrac{\beta_0}{2}\int_{\tau}^1 \big||x|-|\gamma(t)|\big| \,|\dot\gamma(t)|dt \\
    & =\tfrac{\beta_0}{2} \big||x|-|\gamma(t^*)|\big| \int_{\tau}^1 |\dot\gamma(t)| dt  \\
    & \geq \tfrac{\beta_0}{2} \tfrac{\dist(x,y)}{2}\text{len}(\gamma|_{[\tau,1]})  \\
    & \geq \tfrac{\beta_0}{2} \left(\tfrac{\dist(x,y)}{2}\right)^2 
\end{align*}
since by the definition of $t_F\leq \tau$, $\gamma(\tau)$ and $y=\gamma(1)$ must have distance at least $\tfrac{\dist(x,y)}{2}$. This completes the claim establishing \eqref{eq:inf-achieved}, thus by Theorem \eqref{thm:main} implying
\[ |f_0(x)|\leq C_1 \|f\|_\infty \exp\left\{-\tfrac{\beta_0}{8}\dist(x,y)^2\right\} . \]

Having exhibited the theorem in this case, we can now discuss a connection with a classical result in probability and the heat kernel of a magnetic Schr\"odinger operator with uniform field strength. For any $A\in \vec{\cK}(\bbR^2)$ we have that the action of the heat kernel arises, via the Feynman-Kac-It\'o formula \ref{thm:broderix}
\begin{equation}\label{eq:e-val-eqn}
    e^{-tH(A)}f(x) = \bbE\left[ f(\omega_t)e^{i\int_0^t A(\omega_s)\cdot d\omega_s - \tfrac{i}{2}\int_0^t \Div A(\omega_s)ds} \right] . 
\end{equation} 
On the other hand, for the case of a uniform magnetic field $\beta=\beta_0$ where we can gauge-fix to the divergence free potential $A(x) = \tfrac{\beta_0}{2}(-x_2,x_1)$, in which case the quantity in the exponential becomes
\[ i\int_0^t A(\omega_s)\cdot d\omega_s = i \frac{\beta_0}{2}\int_0^t \omega_2(s)d\omega_1(s) - \omega_1(s)d\omega_2(s) =:i\tfrac{\beta_0}{2}\cA_t. \] 
This latter quantity is the celebrated L\'evy stochastic area \cite{levy1951wiener} for planar Brownian motion, which represents the expected area of a domain formed by enclosing Brownian motion with a straight-line.

Now we can study the quantity $\bbE[\exp(i\beta_0 \cA_t)]$, which satisfies by \cite[eqn 3.12]{ikeda1995levy},
\begin{align}\label{eq:Levy-area-formula}
    \bbE_{0,x}^{t,y}[e^{i\beta_0 \cA_t}] & = \frac{\beta_0}{4\pi \, \sinh(\tfrac{\beta_0 t}{2})} \exp\left( - \tfrac{\beta_0}{2} \coth(\tfrac{\beta_0 t}{2})\tfrac{|x-y|^2}{2} + \tfrac{i\beta_0}{2}(x_2y_1-x_1y_2) \right)  ,
\end{align} 
where we have conditioned on Brownian paths $\omega(0)=x$, and $\omega(t)=y$. We now observe that this matches exactly with the celebrated Mehler kernel \cite[pg. 168]{simon1979functional}, the integral kernel of $e^{-tH(A)}$ for the given potential $A(x)$.

\section{Proofs}

We can now quickly present the proofs of our main results.
\begin{proof}[Proof of Thm \ref{thm:main}]
Let $H(A)=\tfrac(-\nabla-iA)^2$ and $H(A)f=\lambda f$ be an eigenfunction, and fix $x\in \Omega^\circ$. Let $T_{\Omega,x}>0$ be the exit time for Brownian motion started at $x$, and for all $T'>0$ we set $T=\min\{T',T_{\Omega,x}\}$. By the Feynman-Kac-It\'o formula, theorem \ref{thm:broderix}, we have 
\begin{equation}\label{main-eq}
    |f(x)| = e^{\lambda T}\left| \bbE \left[ f(\omega(T)) e^{i\int_0^T A(\omega(t))\circ d\omega(t)} \right] \right| \leq \|f\|_{\infty} \left| \bbE \left[e^{\lambda T+ i\int_0^T A(\omega(t))\circ d\omega(t)} \right] \right| 
\end{equation}
and the last term on the right can be rewritten using Ikeda-Manabe's Stochastic Stokes formula \cite[Thm 7.1]{ikeda-manabe}: for any fixed $C^1$ path $\gamma:[0,T]\to \Omega$ let $\Gamma(s,t)=s\omega(t)+(1-s)\gamma(t)$ be the straight-line homotopy connecting $\omega(t)$ to $\gamma(t)$. Then 
\begin{align}\label{eqn:IM-formula}
    \int_0^T & A(\omega(t))\circ d\omega(t)  - \int_\gamma A  = \iint_{D(\gamma,B)}\beta 
\end{align}
where the process $\iint_{D(\gamma,B)}\beta$ on the right hand side is defined to be
\begin{align*}
    \iint_{D(\gamma,\omega)}\beta & =  \int_0^T \left(\int_0^1  s \,  \beta(\Gamma(s,t)) ds \right)\big[ (\omega_2(t)-\gamma_2(t))\circ d\omega_1(t) - (\omega_1(t) - \gamma_1(t) \circ d\omega_2(t)\big] \\
    & \quad\quad+ \int_0^T \left(\int_0^1  (1-s) \,  \beta(\Gamma(s,t))  ds \right) [\omega(t)-\gamma(t)]\wedge \dot\gamma(t) dt \\
    & =: \int_0^T \beta_1(\omega(t),t)\cdot d\omega(t) + \int_0^T \beta_2(\omega(t),t) dt
\end{align*}  
Notice that for $\omega(t)$ given by a deterministic $C^1$ path, the formula \eqref{eqn:IM-formula} reduces to the usual Green's theorem for the domain bounded between these curves. On the other hand, in our setting this process satisfies 
\[  \bbE \left[\iint_{D(\gamma,\omega)}\beta\right]  = \int_0^t \bbE[\beta_2(B_s,s)] ds, \]
and, 
\begin{align*}
     \text{Var} \left[\iint_{D(\gamma,\omega)}\beta\right] & = \int_0^T \bbE[|\beta_1(\omega(t),t)|^2] dt \\
     & = \int_0^T \bbE\left[\left|  \left(\int_0^1 s \beta(\Gamma(s,t)) ds \right) ((\omega_2(t)-\gamma_2(t)), -(\omega_1(t)-\gamma_1(t)))\right|^2 \right] dt \\
     & = \int_0^T \bbE\left[\left(\int_0^1 s\,\beta(\Gamma(s,t))ds\right)^2 |\omega(t)-\gamma(t)|^2\right] dt 
\end{align*}
The reason for considering these quantities is that since $\iint_{D(\gamma,B)}\beta$ is equivalent to a time-changed Brownian motion, with the above expectation and variance, thus we can compute
\[ \bbE[e^{i\iint_{D(\gamma,\omega)}\beta}] = \exp\left\{i \int_0^T \bbE[\beta_2(B_t,t)] dt - \frac{1}{2} \int_0^T \bbE\left[\left(\int_0^1 s\,\beta(\Gamma(s,t))ds\right)^2 |\omega(t)-\gamma(t)|^2\right] dt \right\}.  \] 
In particular this we have that 
\begin{align*}
    \left|\bbE [e^{\lambda T+ i\int_0^t A(\omega(t))\circ d\omega(t)}]\right| & = \exp\left\{ \lambda T - \frac{1}{2} \int_0^T \bbE\left[\left(\int_0^1 s\,\beta(\Gamma(s,t))ds\right)^2 |\omega(t)-\gamma(t)|^2\right] dt\right\}  \\
    & =: \exp\left\{ -\tfrac12\int_0^T \left[\bar\beta(\gamma(t)) - 2\lambda\right] dt \right\} 
\end{align*} 
And it is this final quantity which will play the role that a scalar potential $V(x)$ plays for the usual Agmon metric. This is because, by the main theorem of \cite{fujita1982onsager}, the probability that a Brownian path $\omega(t)$ stays within a fixed distance from $\gamma(t)$ can be estimated by
\[ \bbP( |\omega(t)-\gamma(t)|\leq a, \, 0\leq t\leq T) = C_2\exp\left(-\frac{\nu_1T}{a^2} -\tfrac12\int_0^T |\dot\gamma(t)|^2 dt+o_a(1)\right)  \] 
where $\nu_1$ is the first non-zero eigenvalue of the Dirichlet Laplacian on the unit disc, and $C_2=f_1(0)\int_{\bbD}f_1(x)dx$, where $f_1$ is the corresponding Dircihlet ground state eigenfunction of the unit disc. Now, returning to \eqref{main-eq} we have
\begin{align*}
    |f(x)| & \leq \|f\|_\infty \left|\bbE\left[e^{\lambda T +i\int_0^T A(\omega(t))\circ d\omega(t)}\bigg|\, |\omega(t)-\gamma(t)|\leq a, \, 0\leq t\leq T\right] \right| \\
    & \quad \quad \quad \quad \quad \times \bbP(|\omega(t)-\gamma(t)|\leq a,\, 0\leq t\leq T )  \\
    & \leq C_{a} \|f\|_\infty \exp\left\{ -\tfrac12\int_0^T |\dot\gamma(t)|^2ds -\tfrac12\int_0^T [\bar\beta(\gamma(t))-2(\lambda-\nu_1/a^2)]dt \right\} \\ 
    & \leq C_{a} \|f\|_\infty \exp\left\{ - \int_0^T \sqrt{(\bar\beta(\gamma(t))-2(\lambda-\nu_1/a^2))_+}\, |\dot\gamma(t)|dt \right\} 
\end{align*} 
where we have used in order, the original inequality of \eqref{main-eq}, the absolute value to remove the modulating factors (i.e. the $\beta_2^{\gamma}$ term), and Young's inequality $ab\leq \tfrac{a^2+b^2}{2}$ for the integrand. Ultimately this rests on the relation between the new weighted length functional $\rho_\beta$ and its associated the `classical action' $\cA_\beta$, 
\begin{align*}
   \cL_\beta(\gamma) &:=  \int_0^T \sqrt{(\bar\beta(\gamma(t))-2(\lambda-\nu_1/a^2))_+}\, |\dot\gamma(t)|dt \\
   &\leq \tfrac12\int_0^T |\dot\gamma(t)|^2 dt + \tfrac12 [\bar\beta(\gamma(t))-2(\lambda-\nu_1/a^2) ] dt =: \cA_\beta(\gamma) .  
\end{align*} 
Infimizing over the former gives the new Agmon distance, where the `classically allowed region' arises as $E_\lambda=\{|A|^2\leq 2\lambda\}$. In the final stage of this argument we use the fact that $\cL_\beta$ is independent of the parametrization. We can see this as if we take $\wt \gamma(t)=\gamma(\phi(t))$ then $t' = \phi^{-1}(t)$, $dt' = (\phi^{-1})'(t) dt$
\begin{align*}
     \int_0^{T'} \max\left(\sqrt{ \bar\beta(\wt\gamma(t)) - 2(\lambda-\nu_1/a^2) } ,0 \right)\, & |\wt\gamma^\prime(t)| \, dt  \\
     &\hspace{-2cm}= \int_0^{T} \max\left(\sqrt{ \bar\beta(\gamma(t)) - 2(\lambda-\nu_1/a^2) },0 \right) \, |\gamma^\prime(t)| \, (\phi^{-1})' dt \\
     &\hspace{-2cm} = \int_0^{T} \max\left(\sqrt{ \bar\beta(\gamma(t)) - 2(\lambda-\nu_1/a^2) } ,0 \right)\,|\gamma^\prime(t)| \, dt 
\end{align*}
as claimed.
\end{proof}

\begin{proof}[Proof of Cor \ref{cor:confine}]
Notice first that because $\bbE[XY]\geq \bbE[X]\bbE[Y]$, we have
\begin{align*}
    \bar\beta(\gamma(t)) &= \bbE\left[ \left(\int_0^1 s \beta(\Gamma(s,t)) ds\right)^2 |\omega(t)-\gamma(t)|^2\right] \\
    &\geq \bbE\left[\left(\int_0^1 s \beta(\Gamma(s,t)) ds\right)^2\right] \, \bbE[|\omega(t)-\gamma(t)|^2] \\
    &\geq \left(\bbE\left[\int_0^1 s \beta(\Gamma(s,t)) ds\right]\right)^2 \, \bbE[|\omega(t)-\gamma(t)|^2]
\end{align*} 
where we have applied Jensen's inequality in the final step. Now, given the assumption that $\beta(x)\geq \beta_0|x|^2$ outside a compact set, we have
\[\beta(\Gamma(s,t))\geq \beta_0(s^2|\omega(t)|^2+(1-s)^2|\gamma(t)|^2+2s(1-s)\langle \omega(t),\gamma(t)\rangle).\] 
By linearity of expectation and the facts that $\bbE[\omega(t)] = x$ and $\bbE[|\omega(t)|^2]=|x|^2+2t$ we compute,
\begin{align*}
    \bbE[s\beta(\Gamma(s,t))] & \geq 
    s\beta_0 \left( s^2 (|x|^2+2t) + (1-s)^2 |\gamma(t)|^2 + 2s(1-s)\langle x,\gamma(t)\rangle \right) \\
    & = s\beta_0 (2ts^2 + |sx + (1-s)\gamma(t)|^2). 
\end{align*} 
Combining these inequalities with the identity $\bbE[|\omega(t)-\gamma(t)|^2]=2t+(|x|-|\gamma(t)|)^2$ we can conclude,
\begin{align*}
\bar\beta(\gamma(t)) & \geq \left(\bbE\left[\int_0^1 s \beta(\Gamma(s,t)) ds\right]\right)^2 \, \bbE[|\omega(t)-\gamma(t)|^2] \\ 
    & \geq \beta_0^2\left(\int_0^1 2t\,s^3 + s|sx+(1-s)\gamma(t)|^2 \, ds \right)^2 \, (2t+(|\gamma(t)|-|x|)^2) \\
    & =  \beta_0^2 \left( t + \tfrac{|x|^2}{3} + \tfrac{|x+\gamma(t)|^2}{6} \right)^2  \, (\tfrac{t}{2} + \tfrac{(|\gamma(t)|-|x|)^2}{4}) \geq \beta_0^2 \left(\tfrac{|x|^2}{3}+\tfrac{|x+\gamma(t)|^2}{6}\right)^2 
\end{align*}
where the last inequality will hold for all $t>\tau=\sup_{t>0}\{\big||x|-|\gamma(t)|\big|\leq 2\}$. In fact, by choosing the transversal gauge 
\[ A(x,y) = \left(\int_0^1 \beta(t(x,y))\, dt\right)(-y,x), \] 
which satisfies $\Div A=0$ and $\text{curl }A=\beta$, then  $\gamma(t)\not\in E_\lambda := \{ |A(x,y)|^2\leq 2\lambda \}$ implies that for all $t\in [0,1]$,
\[ \left(\int_0^1 \beta(t\,\gamma(t))\, dt\right)^2|\gamma(t)|^2 >2\lambda .\] 
Thus 
\[ \frac{|x|^2}{3} \geq \frac{|\gamma(t)|^2}{3}\geq \frac{2\lambda}{3\left(\int_0^1 \beta(t\,\gamma(t))\, dt\right)^2} \geq \frac{2\lambda}{3\int_0^1 \beta^2(t\, \gamma(t)) dt} = \frac{2\lambda}{3\beta^2(t_0\, \gamma(t_0))} , \]
for all $t\in [0,1]$. Choosing $a>0$ sufficiently small that 
\[ \frac{\lambda}{3\min_{t\in [0,1]} \beta^2(t\, \gamma(t))}> (\lambda - \nu_1/a^2). \] 
Now because 
\[ \frac{d}{dt}|x+\gamma(t)|^2 = 2\langle \dot\gamma(t),x+\gamma(t)\rangle \leq 2|\dot\gamma(t)|\,|x+\gamma(t)|  \] 
we conclude
\begin{align*}
    \rho_{\beta,\lambda}^a(x,E_\lambda) & \geq \int_\tau^1 \sqrt{\left(\beta_0^2 \left(\tfrac{|x|^2}{3}+\tfrac{|x+\gamma(t)|^2}{6}\right)^2 - 2(\lambda-\nu_1/a^2)\right)_+ } \, |\dot\gamma(t)|\,dt \\
    & \geq \tfrac{\beta_0}{6}\int_\tau^1 |x+\gamma(t)|^2\, |\dot \gamma(t)| \, dt  \geq \tfrac{\beta_0}{6} \int_\tau^1 \tfrac12 |x+\gamma(t)|\, \frac{d}{dt}|x+\gamma(t)|^2 dt \\
    & \geq \tfrac{\beta_0}{6}\int_{\tau}^1 \frac{d}{dt}|x+\gamma(t)|^2 dt \\
    &\geq \tfrac{\beta_0}{6} (|x+y|^2 - 2), 
\end{align*}
where $y\in \partial E_\lambda$ is the point attaining $\rho_{\beta,\lambda}^a(x,E_\lambda)$, and we again take $\tau>0$ sufficiently large that $|x+\gamma(t)|^2>2$ on $[\tau,1]$. From this we observe 
\end{proof}

\begin{proof}[Proof of Cor \eqref{cor:concave}]
    Assuming that $\beta(x)$ is a concave function, we have that $\beta(\Gamma(s,t))\geq s\beta(\omega(t))+(1-s)\beta(\gamma(t))$. Arguing as above we have that
\begin{align*}
    \bar\beta(\gamma(t)) & \geq \bbE\left[ \left(\int_0^1 s\beta(\Gamma(s,t)) ds  \right)^2\right] \bbE[|\omega(t)-\gamma(t)|^2] \\
    & \geq \bbE\left[ \left( \int_0^1 s^2\beta(\omega(t)) + s(1-s)\beta(\gamma(t)) ds \right)^2 \right] ((|x|-|\gamma(t)|)^2+2t) \\ 
    & = \bbE\left[ \left( \frac{\beta(\omega(t)) + \beta(\gamma(t))}{6} \right)^2\right] ((|x|-|\gamma(t)|)^2+2t) \\
    & \geq \tfrac{1}{36}(\bbE[\beta(\omega(t))] + \beta(\gamma(t)))^2 ((|x|-|\gamma(t)|)^2+2t) \\
    & \geq \tfrac{(\beta(\gamma(t)) - \inf \beta)^2}{36}((|x|-|\gamma(t)|)^2+2t) 
\end{align*}
thus after applying Thm \eqref{thm:main} we conclude that for any $C^1$ path $\gamma(t)$ from $x$ to $y$, we have,
\[  |f(x)| \leq C_a\exp\left( -\int_0^1 \sqrt{ \tfrac{(\beta(\gamma(t)) - \inf \beta)^2}{36}((|x|-|\gamma(t)|)^2+2t) - 2(\lambda - \nu_1/a^2) } \, |\dot\gamma(t)| dt \right) ,   \] 
as claimed.
\end{proof}

\begin{proof}[Proof of Prop \eqref{thm:carmona-bdd}]
If $\beta(x)$ is as in the hypotheses, then $\beta+U=W\in L^p(\bbR^2)$ for $p>2$. Thus by the $L^p$ Hodge-Kodaira decomposition \cite[Thm 1.2]{troyanov2009hodge} we have the existence of $A=(A_1,A_2)\in L^p(\bbR^2;\bbR^2)$, satisfying that $\Div A=0$ weakly, and solving $\text{curl}(A)=\beta+U$. From this we conclude that $|A|^2\in L^{p/2}(\bbR^2)$; since $p/2>1$ we can conclude that the corresponding potential $A\in \vec\cK_{\loc}(\bbR^2)$ as needed to apply the Feynman-Kac-It\'o formula, Thm \ref{thm:broderix}.

Fix $a>0$ and let $y$ be a point achieving $W(y)=\essinf_{y_0\in \bbR^2} W(y_0)+a$. As we saw in the proof of Thm \eqref{thm:main}, we have that for any $C^1$ path $\gamma(t):[0,T]\to \Omega$ from $x$ to $y$, that
\[ \left| \bbE\left[ e^{\lambda T + i\int_0^T A(\omega(t))\circ d\omega(t)} \right] \right| = e^{\lambda T} \exp\left\{ -\tfrac12 \int_0^T \bar\beta_x(\gamma(t)) dt \right\}   \] 
which when combined with the Feynman-Kac-It\'o formula \ref{thm:broderix} implies
\[ \frac{|f(x)|}{\|f\|_\infty}  \leq e^{\lambda T} \exp\left\{ -\tfrac12 \int_0^T \left(\int_0^1 s\, \bbE[\beta(\Gamma(s,t))]\, ds\right)^2 \bbE[|\gamma(t)-\omega(t)|^2]\, dt \right\} .  \] 
Now using the splitting $\beta = W - U$ where $W\in L_{\loc}^p(\bbR^2)$ for $p>2$, and $U\geq 0$ and $U\in L^\infty(\bbR^2)$ we note
\[ \left(\int_0^1 s\, \bbE[\beta(\Gamma(s,t))]\, ds\right)^2 \geq  \left(\int_0^1 s\, \bbE[W_\infty - U(\Gamma(s,t))] ds \right)^2 \geq \frac{(W_\infty-\|U\|_{L^\infty})^2}{2} \] 
Now if $A=\{\omega: \sup_{0\leq t\leq T} |\gamma(t)-\omega(t)|<a\}$ then 
\begin{align*}
& \left| \bbE\left[ e^{\lambda T + i\int_0^T A(\omega(t))\cdot d\omega(t)} (1_A+1_{A^c}) \right] \right|  \\
& \hspace{1.5cm} \leq e^{\lambda T} \left(e^{-\tfrac12 \int_0^T \bar\beta(\gamma(t))\, 1_A(\omega(t))\, dt} \bbE[1_A] + e^{-\tfrac{T}{2}W_a^\gamma(x)}\right) \\
& \hspace{1.5cm} \leq e^{\lambda T} \left( C_a e^{-\tfrac{a^2T}{2} (W_\infty-\|U\|_{L^\infty})^2} e^{-\tfrac{\nu_1T}{a^2}-\tfrac12 \int_0^T |\dot\gamma(t)|^2 dt} + e^{-\tfrac{T}{2}W_a^\gamma(x)}  \right) \\
& \hspace{1.5cm} \leq e^{\lambda T} \left( C_a e^{-\tfrac{a^2T}{2}(W_\infty-\|U\|_{L^\infty})^2} e^{-\tfrac{\nu_1T}{a^2}-\tfrac{\ell(\gamma)^2}{2T}} + e^{-\tfrac{T}{2}W_a^\gamma(x)}  \right)
\end{align*} 
where in the penultimate inequality we have used as before that 
\[ \bbE[1_A] = C\exp\left(-\frac{\nu_1T}{a^2} - \tfrac12\int_0^T |\dot\gamma(t)|^2dt\right), \] 
and in the final inequality used the fact that $\ell(\gamma)^2\leq 2T\,  E(\gamma)$.
\end{proof}

\section{Acknowledgements}
We are very grateful to Stefan Steinerberger, Andrea Ottolini, and Robert Hingtgen for helpful discussion and comments throughout this project.

\bibliographystyle{abbrv} 
\bibliography{reference} 

\end{document}